\documentclass[a4paper,11pt]{article}
\usepackage{amsmath,amsthm,amsfonts,amssymb}
\allowdisplaybreaks
\usepackage{graphicx,xcolor}
\usepackage{latexsym}
\usepackage{float}
\usepackage{setspace}
\usepackage{enumerate}
\usepackage{tikz}
\usepackage{geometry}
\usepackage{authblk}
\usepackage{hyperref}
\usepackage[capitalize]{cleveref}
\usepackage{sectsty}
\sectionfont{\centering\mdseries\normalsize}
\makeatletter
\renewcommand{\@seccntformat}[1]{\csname the#1\endcsname.\quad}
\makeatother
\usepackage{titlesec}
\titlespacing{\subsection}{0pt}{*1.5}{*0.1}
\usepackage{enumitem}
\setlist[enumerate,1]{label=(\arabic*)}

\newtheorem{theorem}{Theorem}[section]
\newtheorem{lemma}[theorem]{Lemma}
\newtheorem{definition}[theorem]{Definition}
\newtheorem{example}[theorem]{Example}
\newtheorem{corollary}[theorem]{Corollary}
\newtheorem{proposition}[theorem]{Proposition}

\newcommand{\Z}{\mathbb{Z}}

\title{\Large \textbf{On Cayley Graphs of Skew Braces and Their Distances}}

\author[1]{Ahmad Syarif Ahdal}
\author[1]{Ryan Aziz}
\author[1,2$\ast$]{Denny Riama Silaban}

\affil[1]{\small Departments of Mathematics, Faculty of Mathematics and Natural Sciences, Universitas Indonesia, Depok - Indonesia}
\affil[2]{\small Center for Research Collaboration on Graph Theory and Combinatorics, Indonesia}

\date{}

\begin{document}

\maketitle

\thispagestyle{plain}
\let\thefootnote\relax\footnotetext{
    \noindent $^*$Corresponding author: \texttt{denny@sci.ui.ac.id}\\[0.3em]
    \textit{2020 Mathematics Subject Classification:} 05C20, 05C25, 16T25.\\[0.3em]
}

\begin{quote}
\noindent\textbf{Abstract.} We introduce the left and right Cayley graphs of a finite skew brace $(A,\cdot, \circ)$ with a connection set $T\subset A$, as a directed multigraph with additive and multiplicative edges, depicting the operations $\cdot$ and $\circ$ respectively. We compute the distance from $a$ to $a\circ b$ in these Cayley graphs in terms of factorization lengths over $T$. As an application, we provide a characterization of the ideals of a skew brace in terms of their corresponding Cayley graphs.

\vspace{0.5em}
\noindent\textit{Key words and phrases:} Cayley graph, distance, ideal, path, skew brace.
\end{quote}

\vspace{0.5cm}

\section{INTRODUCTION}

Originally formulated in the study of statistical mechanics, The Yang-Baxter equation (YBE) has since become a central object in quantum groups, knot theory, and related areas. In \cite{drinfeld1992unsolved}, Drinfeld suggested studying a set-theoretical solution of the Yang-Baxter equation, which is a map $r: X\times X \to X\times X$ on a set $X$ satisfying 
$$(r\times \mathrm{id})(\mathrm{id} \times r)(r\times \mathrm{id}) = (\mathrm{id} \times r)(r\times \mathrm{id})(\mathrm{id} \times r).$$

A family of involutive non-degenerate solutions of the YBE is constructed by Rump \cite{rump2007braces} by introducing \textit{braces}, which later is generalized into \textit{skew left braces} (or simply \textit{skew braces}) by Guarniere and Vendramin \cite{guarnieri2017skew} to cover the non-involutive cases. Here a skew brace is a set $A$ equipped with two group operations $\cdot$ and $\circ$ such that $(A,\cdot)$ and $(A,\circ)$ form a group (respectively called an additive and multiplicative group), such that
\[a\circ (bc) = (a\circ b)a^{-1}(a\circ c).\]

Skew braces have found themselves in many areas, including representation theory \cite{kozakai2024representation}, Hopf-Galois theory \cite{stefanello2023connection}, homology \cite{gran2025hopf}, cohomology \cite{rathee2024cohomology}, and recently, in Lie theory \cite{damele2026simple}. Despite these advances, graphs associated with skew braces have received little attention, although there are some studies. One example is the common divisor graph for skew braces as introduced  by Properzi and Van Antwerpen \cite{properzi2025common}, whose vertices are the non-trivial orbits of the multiplicative group $(A,\circ)$, and two distinct vertices are adjacent if and only if their orbit sizes are not coprime, and they investigated the graph's diameter in connection with the left nilpotency of the skew brace. 

The graphs constructed by Properzi and Van Antwerpen, by their construction, do not visualize the interaction between two operations on individual elements and translate algebraic relations into combinatorial structures. To do this, at least for the group $G$, it is useful to use \textit{Cayley graphs}, where the vertices represent elements of the group, and two elements $u$ and $v$ are adjacent if and only if $u \cdot v^{-1} \in T$, where $T\subseteq G$ is a connection set. Extensions of Cayley graphs have also been established on other algebraic systems, such as finite rings \cite{akhtar2009unitary, madhumitha2023graphs} and dihedral groups \cite{miklavic2007distance}. Since skew braces are closely related to groups, it is natural to study Cayley graphs constructed from skew braces and their applications. 

A related study was conducted by Ballester-Bolinches et al.~\cite{ballester2021}, in which they investigated the left brace structure of the permutation group $\mathcal{G}(X,r) = \langle f_x \mid x \in X \rangle$ associated with an involutive, non-degenerate set-theoretic solution $(X,r)$ of the Yang-Baxter equation via Cayley graphs. Starting with the Cayley graph of the multiplicative group $(\mathcal{G}(X,r), \cdot)$ with respect to its natural generating set $S = \{f_x \mid x \in X\}$, they applied an edge-relabeling procedure, where each directed edge $\alpha \to \alpha f_x$, initially labeled by $x$, is relabeled by $\alpha(x)$. They proved that the resulting labeled graph forms the Cayley graph of an abelian group $(\mathcal{G}(X,r), +)$ with respect to the same generating set, where the induced addition $\alpha + f_{\alpha(x)} = \alpha f_x$ represents the left brace relation $\alpha f_x = \alpha + \lambda_\alpha(f_x)$ (analogous to $a \circ b = a \cdot \lambda_a(b)$ in a skew brace). 

In their construction, the multiplicative and additive Cayley graphs share the exact same edge sets and differ only in their edge labels. Because a multiplicative directed edge $(\alpha, \alpha \circ f_x)$ corresponds to an additive directed edge by $\lambda_\alpha(f_x) = f_{\alpha(x)}$, their relabeling yields a valid Cayley graph for the additive group only when the connection set is invariant under the $\lambda$-action—a condition automatically satisfied by the natural generating set $S$. Simply relaxing their connection set to an arbitrary subset $T$ would cause the relabeled additive edges to vary across vertices as $\lambda_\alpha(T)$, failing to produce a well-defined Cayley graph with respect to $T$. Consequently, the restriction of $T$ to be a generating set makes it harder to observe internal substructures such as ideals, which in many cases are proper subsets. 

To tackle this limitation, in this paper we consider both group operations of a finite skew brace $A$ at the same time, and allow the connection set $T$ to be an arbitrary subset of $A$. We introduce the concept of Cayley graph of a finite skew brace, denoted by $\mathrm{Cay}((A, \cdot, \circ), T)$, where we define the vertex set as $A$ and the edge set as a disjoint union $E = E_\cdot \sqcup E_\circ$ corresponding to the first and second operations, respectively. Here, the adjacency of any two vertices in the graph follows the definition from Aikawa \cite{aikawa2026hashing}. We establish the characterization of paths and distances in the Cayley graph $\mathrm{Cay}((A, \cdot, \circ), T)$. Specifically, we show that the distance between vertices $a$ and $a\circ b$ corresponds to the minimal factorization length of the corresponding element $\lambda_a(b)$, where $\lambda : (A,\circ)\to \operatorname{Aut}(A,\cdot)$ is a group homomorphism (see Theorem \ref{theo:distance-cayley}).  Finally, in Theorem \ref{theorem_ideal}, we characterize the ideals of a skew brace by looking at the properties of their corresponding Cayley graphs.
\vspace{1.5 cc}
\section{PRELIMINARIES} \label{sec:preliminaries}

\subsection{Skew Braces} \label{sec:preliminaries} \mbox{} \par
Throughout this paper, by a skew brace we mean a skew left brace. There is the right version of skew brace, which can be treated analogously. We also assume that all skew braces are finite. We refer to \cite{guarnieri2017skew} for more details.
\vspace{.5 cc}
\begin{definition} \label{def:skew_brace}
\emph{A skew (left) brace} $(A,\cdot,\circ)$ consists of groups $(A,\cdot)$ and $(A,\circ)$ such that
\begin{equation}\label{eq:brace_distributive}
    a \circ (b\cdot c) = (a \circ b)\cdot  a^{-1} \cdot  (a \circ c)
\end{equation}
holds for all $a, b, c \in A$, where $a^{-1}$ denotes the inverse of $a$ in $(A,\cdot)$\end{definition}

The groups $(A, \cdot)$ and $(A,\circ)$ are traditionally called the \emph{additive group} and \emph{multiplicative group} respectively, although in our setting, the group $(A,\cdot)$ is in general not abelian. In the case where $(A, \cdot)$ is abelian, the skew brace $(A,\cdot, \circ)$ is a brace in the sense of Rump \cite{rump2007braces}.

\vspace{.5cc}

In any skew brace $(A,\cdot,\circ)$, the groups $(A,\cdot)$ and $(A,\circ)$ have the same identity element, denoted by $1$. We also denote $a^{-1}$ (resp. $a^\dagger$) as the inverse of $a$ in $(A,\cdot)$ (resp. in $(A,\circ)$). 

Any group $(A, \cdot)$ can be made into a skew brace in two ways, either into a \textit{trivial skew brace} $(A,\cdot, \cdot)$, or into an \textit{almost trivial skew brace} $(A,\cdot,\cdot^{\operatorname{op}})$, where $\cdot^{\operatorname{op}}$ denotes the opposite multiplication, i.e. $a \cdot^{\operatorname{op}} b = ba$ for any $a,b\in A$. One can easily verify that the skew brace condition holds. An almost trivial skew brace is a trivial skew brace if $(A,\cdot)$ is an abelian group.

For any skew brace, there is a group homomorphism
\begin{align}\label{lambda-map}
\lambda: (A,\circ)\to \mathrm{Aut}(A,\cdot), \qquad a \mapsto \lambda_a(b) = a^{-1}(a\circ b),
\end{align}
and using the map $\lambda$, one can show that $a \circ b = a \cdot \lambda_a(b)$, and $a \cdot b = a \circ \lambda_a^{-1}(b)$.

Here are some examples of non-trivial skew braces.
\vspace{.5cc}

\begin{example} \label{ex:z4_brace} Let $(\Z_4, +)$ be the abelian group of integers modulo 4, and define the operation $\circ$ by
\begin{equation*}
    a \circ b = a + b + 2ab \pmod 4.
\end{equation*}
One can check that $(\Z_4, +, \circ)$ is a skew brace, with $a^\dagger = -a(1 + 2a)^{-1} \equiv a(2a - 1) \pmod 4$. 
\end{example}
\vspace{.5cc}
\begin{example} [c.f. \cite{kozakai2024representation}] \label{ex:s3_brace}
Let $(S_3, \cdot)$ be the permutation group of degree $3$. We can decompose $S_3$ into the product of two subgroups $H = \langle (123) \rangle$ and $K = \langle (12) \rangle$, such that every element $a \in S_3$ factorizes uniquely as $a = a_1 \cdot a_2$ with $a_1 \in H$ and $a_2 \in K$. 
The operation $\cdot$ is a standard composition of permutations and $\circ$ is defined as $a \circ b = a_1 \cdot b \cdot a_2$, giving a non-trivial skew brace where the additive structure is non-abelian.
\end{example}

Finally, we recall the notion of ideal in skew braces as follows.
\vspace{.5 cc}
\begin{definition}\label{def:ideal_skewbrace}
    Let $A$ be a skew brace. A normal subgroup $I$ of $(A,\circ)$ is an ideal of $A$ if $I\cdot a=a \cdot I$ and $\lambda_a(I)\subseteq I$ for all $a\in A$.
\end{definition}
\vspace{.5cc}
\begin{lemma}\label{lem:propideal_skewbrace}
    Let $A$ be a skew brace and let $I\subseteq A$ to be an ideal. Then the following properties hold:
\begin{enumerate}
    \item $I$ is a normal subgroup of $(A,\cdot)$.
    \item $a\circ I=a\cdot I$, for all $a\in A$.
\end{enumerate}
\end{lemma}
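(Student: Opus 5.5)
The plan is to prove part (2) first and then deduce part (1) from it.

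\emph{Part (2).} For $a\in A$ and $x\in I$ we have $a\circ x=a\cdot\lambda_a(x)$, and $\lambda_a(x)\in I$ because $\lambda_a(I)\subseteq I$. Hence $a\circ I\subseteq a\cdot I$. Since $A$ is finite and left translations in either group are bijections, $|a\circ I|=|I|=|a\cdot I|$, so the two sets are equal. (Without finiteness one can instead use $\lambda_a^{-1}=\lambda_{a^\dagger}$ and $\lambda_{a^\dagger}(I)\subseteq I$ to get $\lambda_a(I)=I$.)

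\emph{Part (1).} First I show that $I$ is a subgroup of $(A,\cdot)$.
\begin{itemize}
\item $1\in I$, because the two groups share the identity.
\item Let $x,y\in I$. Then $x\cdot y=x\circ\lambda_x^{-1}(y)$. We have $\lambda_x^{-1}=\lambda_{x^\dagger}$ because $\lambda$ is a homomorphism, so $\lambda_x^{-1}(y)\in I$. As $I$ is closed under $\circ$, it follows that $x\cdot y\in I$.
\item Let $x\in I$. Then $1=x\cdot x^{-1}=x\circ\lambda_x^{-1}(x^{-1})$, so $\lambda_x^{-1}(x^{-1})=x^\dagger\in I$. Applying $\lambda_x$ gives $x^{-1}=\lambda_x(x^\dagger)\in I$.
\end{itemize}
Normality in $(A,\cdot)$ then follows directly from the ideal condition $I\cdot a=a\cdot I$ for every $a\in A$, which is exactly the statement that left and right cosets agree.

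No step here is a serious obstacle. The only point needing care is the closure of $I$ under $\cdot$ and under $\cdot$-inverses. It is handled by systematically using $\lambda_a^{-1}=\lambda_{a^\dagger}$ together with the $\lambda$-invariance of $I$.
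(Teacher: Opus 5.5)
Your proof is correct. The paper states this lemma without proof, as a preliminary recalled from the literature. Your argument is the standard one and matches the manipulations the paper itself uses in proving Theorem~\ref{theorem_ideal}, namely $\lambda_v(T)=T$ obtained from $\lambda_v(T),\lambda_{v^\dagger}(T)\subseteq T$, and $v\circ T=v\,\lambda_v(T)$.

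One cosmetic point: you announce that (1) will be deduced from (2), but your argument for (1) never uses (2). If you want that deduction, for $x\in I$ part (2) gives $x\cdot I=x\circ I=I$. This yields closure under $\cdot$ immediately, and since $1\in x\cdot I$, also $x^{-1}\in I$.
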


\vspace{0.5cc}

\subsection{Cayley Graphs of Groups}\mbox{}\par

We assume our readers are familiar with graph theory, including Cayley graphs, and thus this section is intended to recall and fix some terminologies. We refer to  \cite{diestel2017graph, liu2022eigenvalues} for basics in graph theory.

Recall that a \emph{finite directed graph} (or \emph{digraph}) $G$ is a graph that consists of a non-empty finite vertex set $V(G)$, which contains vertices of $G$ and an edge set $E(G)$, which contains edges of $G$ in the form of ordered pairs of vertices. For an edge $(u, v) \in E(G)$, $u$ is said to be \emph{adjacent to} $v$, and $v$ is \emph{adjacent from} $u$ and the edge $(u,v)$ is said to be \emph{incident} to both $u$ and $v$. A self-incident edge of the form $(u,u)$ is referred to as a \emph{loop}, whereas multiple edges connecting the exact same pair of distinct vertices are known as \emph{parallel edges}. A graph that allows loops and parallel edges is termed a \emph{directed multigraph}, and it is called a \emph{simple directed graph} when it contains neither loops nor parallel edges. Let $G = (V, E)$ and $G' = (V', E')$ be two graphs. $G$ and $G'$ are isomorphic, and write $G \simeq G'$ if there exists a bijection $\varphi: V \to V'$ which preserves the adjacency of vertices, that is,  $(x,y) \in E \Leftrightarrow \varphi(x)\varphi(y) \in E'$.

It is known, for a given group, one can construct its Cayley graph. Following Aikawa \cite{aikawa2026hashing}, we recall the definition of left and right Cayley graphs.
\begin{definition}[c.f. \cite{aikawa2026hashing}]
Let $A$ be a finite group and let $T$ be a subset of $A$ (called a \textit{connection set}). The \emph{left Cayley graph}, denoted by $\operatorname{Cay}_L(A, T)$, is the graph with vertex set $V(\operatorname{Cay}_L(A, T)) = A$ and edge set
\begin{equation}
    E(\operatorname{Cay}_L(A, T)) = \left\{ (tv, v) \mid v \in A, \, t \in T \right\}.
\end{equation}
In contrast, the \emph{right Cayley graph}, denoted by $\operatorname{Cay}_R(A, T)$, is the graph with vertex set $A$ and edge set
\begin{equation}
    E(\operatorname{Cay}_R(A, T)) = \left\{ (vt, v) \mid v \in A, \, t \in T \right\}.
\end{equation}
\end{definition}

We note that the definition of Cayley graphs in \cite{aikawa2026hashing} uses edges $(u,tu)$ and $(u,ut)$ for the left and right Cayley graphs, whereas in our definition, we use $(tv,v)$ and $(vt,v)$. These two conventions differ only by reversing the direction of every edge. Moreover, \cite{aikawa2026hashing} requires $T \subseteq A \setminus \{1\}$,
while we allow any subset $T \subseteq A$ unless stated otherwise.

When the connection set $T$ is inverse-closed (i.e., $T = T^{-1}$), then the Cayley graph is undirected. If $1$ is also not in $T$, then the Cayley graph has no loops and hence can be identified as a simple undiricted graph. Furthermore, it is known that the Cayley graph of a group $A$ is connected if and only if the connection set $T$ generates $A$ (See Liu and Zhou \cite{liu2022eigenvalues} for more details.)
\vspace{1.5 cc}
\section{MAIN RESULTS}

In this section, we explore the definitions, properties, and examples of Cayley graphs of finite skew braces, focusing on their connection to the skew brace. 
\vspace{.5cc}
\begin{definition}\label{def:cayley-skew-brace}
Let $(A,\cdot,\circ)$ be a finite skew brace and $T \subseteq A$ a connection set.
The \emph{left Cayley graph} $\operatorname{Cay}_L((A,\cdot,\circ),T)$ is the directed multigraph
with vertex set $A$ and edge set $E_{\cdot,L} \sqcup E_{\circ,L}$, where
\begin{align*}
E_{\cdot,L} &= \{(u,v) \in A \times A \mid u \cdot v^{-1} \in T\}, &
E_{\circ,L} &= \{(u,v) \in A \times A \mid u \circ v^{\dagger} \in T\}.
\end{align*}
The \emph{right Cayley graph} $\operatorname{Cay}_R((A,\cdot,\circ),T)$ is defined in the same way,
with $E_{\cdot,L}$ and $E_{\circ,L}$ replaced by
\begin{align*}
E_{\cdot,R} &= \{(u,v) \in A \times A \mid v^{-1} \cdot u \in T\}, &
E_{\circ,R} &= \{(u,v) \in A \times A \mid v^{\dagger} \circ u \in T\}.
\end{align*}
\end{definition}

We call an element of $E_{\cdot}$ (resp. $E_{\circ}$) as an additive edge (resp. a multiplicative edge). 

One can observe that the left Cayley graph of trivial skew braces $(A,\cdot, \cdot)$ with the connection set $T\subseteq A$ is identical to two copies of $\operatorname{Cay}_L((A,\cdot), T)$ sharing the same vertex set. To see this, note that in the trivial skew brace, $v^{\dagger} = v^{-1}$ for any $v\in A$, implying that $E_{\cdot,L} = E_{\circ, L}$. Therefore, by Definition \ref{def:cayley-skew-brace}, $E= E_{\cdot, L}\sqcup E_{\circ, L} = E_{\cdot, L}\sqcup E_{\cdot,L} $, and thus every edge $(u,v)$ in $\operatorname{Cay}_L((A,\cdot),T)$ appears exactly twice in $\operatorname{Cay}_L((A,\cdot,\cdot),T)$. Moreover, if $(A,\cdot)$ is abelian, then $\operatorname{Cay}_L((A,\cdot,\cdot),T) = \operatorname{Cay}_R((A,\cdot,\cdot),T)$. The case for the right Cayley graph of trivial skew brace is similar. 

In the case of an almost trivial skew brace, we have the following proposition:
\vspace{0.5 cc}
\begin{proposition}
Let $(A, \cdot, \circ)$ be an almost trivial  skew brace where $\circ = \cdot^{\operatorname{op}}$, and let $T \subseteq A$ be a connection set. Then the following hold:
\begin{enumerate}
    \item $E_{\cdot,L}$ induces $\mathrm{Cay}_L((A, \cdot), T)$ and $\mathrm{Cay}_R((A, \circ), T)$.
    \item $E_{\cdot,R}$ induces $\mathrm{Cay}_R((A, \cdot), T)$ and $\mathrm{Cay}_L((A, \circ), T)$.
    \item $E_{\circ,L}$ induces $\mathrm{Cay}_L((A, \circ), T)$ and $\mathrm{Cay}_R((A, \cdot), T)$.
    \item $E_{\circ,R}$ induces $\mathrm{Cay}_R((A, \circ), T)$ and $\mathrm{Cay}_L((A, \cdot), T)$.
\end{enumerate}
\end{proposition}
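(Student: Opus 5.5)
The proof is a direct comparison of edge sets. By the definition of the group Cayley graphs above, $\mathrm{Cay}_L((A,\cdot),T)$ has edge set $\{(tv,v)\}$, and $(u,v)$ is such an edge iff $u\cdot v^{-1}\in T$. Likewise, $(u,v)$ is an edge of $\mathrm{Cay}_R((A,\cdot),T)$ iff $v^{-1}\cdot u\in T$. The same descriptions hold for $(A,\circ)$ with $v^\dagger$ in place of $v^{-1}$. So for each item I will rewrite the defining condition of the given edge set $E_{\cdot,L}$, $E_{\cdot,R}$, $E_{\circ,L}$, $E_{\circ,R}$ in two ways, once in the language of $(A,\cdot)$ and once in the language of $(A,\circ)$. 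I read ``induces'' as ``the set of pairs coincides with the edge set of'', on the common vertex set $A$.

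The key observation for the almost trivial brace $\circ=\cdot^{\operatorname{op}}$ is this: since $a\circ b = ba$, the identity $1$ is shared, and $v\circ v^{-1}=v^{-1}v=1$. Hence $v^\dagger = v^{-1}$ for all $v$. Consequently, for all $u,v\in A$,
\[
u\circ v^\dagger = v^{-1}\cdot u,\qquad v^\dagger\circ u = u\cdot v^{-1}.
\]

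Now the four items follow by substitution.

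For (1), $E_{\cdot,L}$ is defined by $u\cdot v^{-1}\in T$, which is exactly the edge condition of $\mathrm{Cay}_L((A,\cdot),T)$. By the second identity, the same condition reads $v^\dagger\circ u\in T$, which is the edge condition of $\mathrm{Cay}_R((A,\circ),T)$.

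For (2), $E_{\cdot,R}$ is defined by $v^{-1}\cdot u\in T$, the edge condition of $\mathrm{Cay}_R((A,\cdot),T)$. By the first identity it equals $u\circ v^\dagger\in T$, the edge condition of $\mathrm{Cay}_L((A,\circ),T)$.

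Items (3) and (4) are the same equalities of sets read in reverse. We have $E_{\circ,L}=E_{\cdot,R}$ and $E_{\circ,R}=E_{\cdot,L}$, and these give the stated identifications.

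There is no real obstacle here. The only points needing care are the bookkeeping of the paper's edge convention $(tv,v)$, namely that $(u,v)$ is an edge iff $uv^{-1}\in T$ rather than $v u^{-1}\in T$, and the check that the inverse in $(A,\cdot^{\operatorname{op}})$ coincides with the inverse in $(A,\cdot)$.
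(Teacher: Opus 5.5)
Your proof is correct and follows the paper's argument. The paper likewise rewrites $uv^{-1}$ as $v^{\dagger}\circ u$ to obtain $E_{\cdot,L}=E_{\circ,R}$, and says the remaining items are similar; you make explicit the facts it uses silently, namely $v^{\dagger}=v^{-1}$ in $(A,\cdot^{\operatorname{op}})$ and the pair of identities $u\circ v^{\dagger}=v^{-1}u$, $v^{\dagger}\circ u=uv^{-1}$, which settle all four items.
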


\begin{proof}
It is obvious that $E_{\cdot,L}$ induces $\mathrm{Cay}_L((A, \cdot), T)$ by Definition \ref{def:cayley-skew-brace}. Furthermore, we have $(u,v)\in E_{\cdot, L}$ iff $uv^{-1} = v^{\dagger}\circ u \in T$ iff $(u,v)\in E_{\circ, R}$. This proved part (1). The proof for other parts is similar.
\end{proof}

In the following examples, we give some examples of Cayley graphs of non trivial finite skew braces.

\vspace{.5 cc}

\begin{example} \label{ex:cay skew brace Z4}
Consider the skew brace $(\mathbb{Z}_4,+,\circ)$ in Example \ref{ex:z4_brace} with $a\circ b=a + b + 2ab \pmod 4$. Let $T = \{3\}$. By Definition~\ref{def:cayley-skew-brace}, $(u,v)\in E_{+,L}$ iff $u-v\in \{3\}$ iff $u=3+v$. Similarly, we have $(u,v)\in E_{\circ, L}$ iff $u\circ v^{\dag} \in \{3\}$ iff $u=3\circ v$. The resulting Cayley graph is shown in Figure~\ref{fig:cayley-z4-t3}.  One can see that the additive edges set $E_{\cdot,L}$ induces a directed 4-cycle, while the multiplicative edges set $E_{\circ,L}$ induces two disjoint 2-cycle with double edges. Since $\mathbb{Z}_4$ is abelian with respect to both $+$ and $\circ$, then both $-v+u$ and $v^{\dagger}\circ u$ are in $T$, and thus $\mathrm{Cay}_L((\mathbb{Z}_4, +, \circ), T)= \mathrm{Cay}_R((\mathbb{Z}_4, +, \circ), T)$.

\begin{figure}[H]
    \centering
    \begin{tikzpicture}[
        scale=0.9, transform shape, 
        every node/.style={circle, draw=black, fill=gray!10, font=\footnotesize, minimum size=6mm, inner sep=0pt},
        >=stealth
    ]
       
        \node (0) at (-1.3, 1.3) {$0$};
        \node (1) at (1.3, 1.3) {$1$};
        \node (2) at (1.3, -1.3) {$2$};
        \node (3) at (-1.3, -1.3) {$3$};

        \draw[->, semithick, red, bend right=20] (3) to (0);
        \draw[->, semithick, blue, bend left=35] (3) to (0);
        \draw[->, semithick, blue, bend right=20] (0) to (3);
        \draw[->, semithick, red] (0) to (1);
        \draw[->, semithick, red, bend right=20] (1) to (2);
        \draw[->, semithick, blue, bend left=35] (1) to (2);
        \draw[->, semithick, blue, bend right=20] (2) to (1);
        \draw[->, semithick, red] (2) to (3);

    \end{tikzpicture}
    
    \vspace{0.15cm}
    \begin{tikzpicture}[scale=0.85, transform shape]
        \draw[->, semithick, red] (0,0) -- (0.5,0) node[right, text=black, font=\scriptsize, draw=none, fill=none] {Additive edge};
        \draw[->, semithick, blue] (3.4,0) -- (3.9,0) node[right, text=black, font=\scriptsize, draw=none, fill=none] {Multiplicative edge};
    \end{tikzpicture}

    \caption{The Cayley graph $\mathrm{Cay}_L((\mathbb{Z}_4, \cdot, \circ), \{3\})$.}
    \label{fig:cayley-z4-t3}
\end{figure}
\end{example}

\vspace{0.05 mm}

\begin{example} \label{ex:cay skew brace S3}
The left and right Cayley graphs of a skew brace $(S_3,\cdot,\circ)$ (as discussed in Example \ref{ex:s3_brace}) with connection set $T = \{(13)\}$ are given in Figure~\ref{fig:cayley-s3-t13} below. For both graphs, the additive edge sets $E_{\cdot}$ induce three disjoint 2-cycles with double edges, while the multiplicative edge sets $E_{\circ}$ induce a directed 6-cycle. Since $\circ$ is not commutative, the left and right Cayley graph of $(S_3, \cdot, \circ)$ are not isomorphic.

\begin{figure}[H]
    \vspace{-0.3cm}
    \centering
    \begin{tikzpicture}[
        scale=0.9, transform shape, 
        vertex/.style={
            circle, 
            draw=black, 
            fill=gray!10, 
            font=\tiny, 
            text width=5.6mm, 
            align=center, 
            inner sep=0pt
        },
        >=stealth
    ]
    
        \begin{scope}[xshift=-3.2cm] 
            \node[font=\normalsize] at (0, 2.2) {$\mathrm{Cay}_L((S_3, \cdot, \circ), T)$};
            
            \node[vertex] (id)  at (90:1.45cm)  {$1$};
            \node[vertex] (23)  at (30:1.45cm)  {$(23)$};
            \node[vertex] (123) at (330:1.45cm) {$(123)$};
            \node[vertex] (12)  at (270:1.45cm) {$(12)$};
            \node[vertex] (132) at (210:1.45cm) {$(132)$};
            \node[vertex] (13)  at (150:1.45cm) {$(13)$};

            \draw[->, semithick, red, bend left=15] (id) to (13);
            \draw[->, semithick, red, bend left=15] (13) to (id);

            \draw[->, semithick, red, bend left=18] (12) to (123);
            \draw[->, semithick, red, bend left=18] (123) to (12);

            \draw[->, semithick, red, bend left=15] (23) to (132);
            \draw[->, semithick, red, bend left=15] (132) to (23);

            \draw[->, semithick, blue, bend left=15] (id) to (23);
            \draw[->, semithick, blue, bend left=15] (23) to (123);
            \draw[->, semithick, blue, bend left=42] (123) to (12);
            \draw[->, semithick, blue, bend left=15] (12) to (132);
            \draw[->, semithick, blue, bend left=15] (132) to (13);
            \draw[->, semithick, blue, bend left=45] (13) to (id);
        \end{scope}

        \begin{scope}[xshift=3.2cm]  
            \node[font=\normalsize] at (0, 2.2) {$\mathrm{Cay}_R((S_3, \cdot, \circ), T)$};
            
            \node[vertex] (id_R)  at (90:1.45cm)  {$1$};
            \node[vertex] (23_R)  at (30:1.45cm)  {$(23)$};
            \node[vertex] (123_R) at (330:1.45cm) {$(123)$};
            \node[vertex] (12_R)  at (270:1.45cm) {$(12)$};
            \node[vertex] (132_R) at (210:1.45cm) {$(132)$};
            \node[vertex] (13_R)  at (150:1.45cm) {$(13)$};

            \draw[->, semithick, red, bend left=18] (id_R) to (13_R);
            \draw[->, semithick, red, bend left=18] (13_R) to (id_R);

            \draw[->, semithick, red, bend left=18] (132_R) to (12_R);
            \draw[->, semithick, red, bend left=18] (12_R) to (132_R);

            \draw[->, semithick, red, bend left=18] (123_R) to (23_R);
            \draw[->, semithick, red, bend left=18] (23_R) to (123_R);

            \draw[->, semithick, blue, bend left=42] (id_R) to (13_R);
            \draw[->, semithick, blue, bend left=15] (13_R) to (132_R);
            \draw[->, semithick, blue, bend left=42] (132_R) to (12_R);
            \draw[->, semithick, blue, bend left=15] (12_R) to (123_R);
            \draw[->, semithick, blue, bend left=42] (123_R) to (23_R);
            \draw[->, semithick, blue, bend left=15] (23_R) to (id_R);
        \end{scope}

    \end{tikzpicture}
    
    \vspace{0.15cm}
    \begin{tikzpicture}[scale=0.85, transform shape]
        \draw[->, semithick, red] (0,0) -- (0.5,0) node[right, text=black, font=\scriptsize] {Additive edge};
        \draw[->, semithick, blue] (3.4,0) -- (3.9,0) node[right, text=black, font=\scriptsize] {Multiplicative edge};
    \end{tikzpicture}

    \vspace{-0.15cm}
    \caption{The left and right Cayley graphs of the skew brace $S_3$ with connection set $T=\{(13)\}$.}
    \label{fig:cayley-s3-t13}
    \vspace{-0.3cm}
\end{figure}
\end{example}

\vspace{0.5 cc}

\subsection{Path and Distance Characterizations}\mbox{}\par
We now define paths and distances in Cayley graphs of skew braces. In this section, a path refers to a directed path, following the terminology in \cite{diestel2017graph}.
\vspace{0.5 cc}
\begin{definition}\label{def:paths_distances}
Let $A=(A,\cdot,\circ)$ be a finite skew brace, $T\subseteq A$, and $a,b\in A$. 
\begin{itemize}
    \item An \emph{additive} (resp.\ \emph{multiplicative}) \emph{path} of length $k \ge 0$ from $a$ to $b$ in $\operatorname{Cay}_L((A,\cdot,\circ),T)$ is a sequence of distinct vertices $a=u_0, u_1, \dots, u_k=b$ such that $(u_{i-1}, u_i) \in E_{\cdot,L}$ (resp.\ $E_{\circ,L}$) for all $i \in \{1, \dots, k\}$.
    
    \item The \emph{additive distance} $d_{\cdot,L}(a,b)$ (resp.\ \emph{multiplicative distance} $d_{\circ,L}(a,b)$) from $a$ to $b$ is the length of a shortest additive (resp.\ multiplicative) path from $a$ to $b$. If no such path exists, then $d_{\cdot,L}(a,b) = \infty$ (resp.\ $d_{\circ,L}(a,b) = \infty$).
\end{itemize}
Paths and distances $d_{\cdot,R}(a,b)$ and $d_{\circ,R}(a,b)$ in $\operatorname{Cay}_R(A,T)$ are defined analogously.
\end{definition}

We also define a length of an element $a \in A$ as follows.
\vspace{0.5 cc}
\begin{definition}\label{def:element_length}
Let $A=(A,\cdot,\circ)$ be a finite skew brace and $T\subseteq A$. For $a\in A$, the \emph{additive length} $\ell_{\cdot,T}(a)$ (resp.\ \emph{multiplicative length} $\ell_{\circ,T}(a)$) is the smallest $k\geq 0$ such that
\[
a = t_1 \cdot t_2 \cdot ... \cdot t_k \quad (\text{resp. } a = t_1 \circ t_2\cdot ... \cdot  t_k),
\]
for some $t_1, \dots, t_k \in T$, such that $\ell_{\cdot, T}(1) = \ell_{\circ, T}(1) = 0$,
and $\ell_{\cdot,T}(a)=\infty$ (resp. $\ell_{\circ,T}(a)=\infty$) if no such $k$ exists.
\end{definition}
Clearly both the additive and multiplicative length of $a\ne 1\in T$ is $1$.

We have the following result:
\begin{theorem}\label{theo:distance-cayley}
For all $a, b\in A$ we have:
\begin{align}
  d_{\cdot, L}(a, a \circ b) =& \ell_{\cdot, T}(a \cdot \lambda_a(b)^{-1} \cdot a^{-1}),\label{eq:dL,cdot}\\
  d_{\cdot, R}(a, a \circ b) =& \ell_{\cdot, T}(\lambda_a(b)^{-1}), \label{eq:dR,cdot}\\
  d_{\circ, L}(a,a\circ b)=& \ell_{\circ, T}(a\circ b^{\dagger}\circ a^{\dagger}),\label{eq:dL,circ}\\
d_{\circ, R}(a, a\circ b)=& \ell_{\circ, T}(b^{\dagger}).\label{eq:dR,circ}
\end{align}
\end{theorem}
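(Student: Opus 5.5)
The plan is to prove all four identities the same way: a directed walk in the relevant edge set corresponds to a factorization over $T$ of one fixed element that depends only on the endpoints. I then compare shortest walks with shortest paths. Throughout I read the multiplicative length in Definition~\ref{def:element_length} as using $\circ$ for every factor, i.e.\ $a=t_1\circ\cdots\circ t_k$.

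\emph{Step 1 (walks versus factorizations).} Take a walk $u_0,u_1,\dots,u_k$ whose consecutive pairs lie in $E_{\cdot,L}$, and put $t_i=u_{i-1}u_i^{-1}\in T$. Then $u_{i-1}=t_iu_i$, and telescoping gives $u_0=t_1t_2\cdots t_k\,u_k$. Conversely, let $x=t_1\cdots t_k$ with $t_i\in T$ and a target vertex $v$. Setting $u_k=v$ and $u_{i-1}=t_iu_i$ produces a walk from $xv$ to $v$ of length $k$. So walks of length $k$ from $u$ to $v$ correspond exactly to factorizations of $uv^{-1}$ as a product of $k$ elements of $T$.

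The other three edge sets work the same way:
\begin{itemize}
\item In $E_{\cdot,R}$ we have $u_{i-1}=u_it_i$, so $u_0=u_k\,t_k\cdots t_1$, and the relevant element is $v^{-1}u$.
\item In $E_{\circ,L}$ we have $u_0=t_1\circ\cdots\circ t_k\circ u_k$, and the relevant element is $u\circ v^{\dagger}$.
\item In $E_{\circ,R}$ the relevant element is $v^{\dagger}\circ u$.
\end{itemize}
In the right-hand cases the factors appear in reversed order. That is harmless, since reversing the list still writes the element as a product of $k$ elements of $T$.

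\emph{Step 2 (walks versus paths).} Definition~\ref{def:paths_distances} requires the vertices of a path to be distinct, so I need that the minimal walk length equals the minimal path length. A path is a walk, so one inequality is immediate. For the other, if a walk repeats a vertex $u_i=u_j$ with $i<j$, deleting the segment between them gives a strictly shorter walk with the same endpoints. Hence a shortest walk is a path. Both quantities are $\infty$ simultaneously, so the infinite cases match as well. Combined with Step 1, the distance from $u$ to $v$ therefore equals $\ell_{\cdot,T}(uv^{-1})$, $\ell_{\cdot,T}(v^{-1}u)$, $\ell_{\circ,T}(u\circ v^\dagger)$ or $\ell_{\circ,T}(v^\dagger\circ u)$, according to the case. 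The case $u=v$ gives $0$, matching the convention $\ell(1)=0$.

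\emph{Step 3 (evaluate at $u=a$, $v=a\circ b$).} I use $a\circ b=a\,\lambda_a(b)$ from \eqref{lambda-map}.
\begin{itemize}
\item For \eqref{eq:dL,cdot}: $a(a\circ b)^{-1}=a\,\lambda_a(b)^{-1}a^{-1}$.
\item For \eqref{eq:dR,cdot}: $(a\circ b)^{-1}a=\lambda_a(b)^{-1}a^{-1}a=\lambda_a(b)^{-1}$.
\item For \eqref{eq:dL,circ}: $(a\circ b)^\dagger=b^\dagger\circ a^\dagger$, so $a\circ(a\circ b)^\dagger=a\circ b^\dagger\circ a^\dagger$.
\item For \eqref{eq:dR,circ}: $(a\circ b)^\dagger\circ a=b^\dagger\circ a^\dagger\circ a=b^\dagger$.
\end{itemize}

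The only point needing care is Step 2, the distinct-vertex requirement, and the cycle-deletion argument handles it. Everything else is bookkeeping with the order of factors, especially in the right Cayley graph, where the factorization comes out reversed.
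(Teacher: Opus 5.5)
Your proposal is correct and uses the same telescoping (backward-substitution) argument as the paper, including the same evaluations such as $a(a\circ b)^{-1}=a\,\lambda_a(b)^{-1}a^{-1}$ and its three analogues. It is more complete than the paper's proof: the paper derives a length-$k$ factorization from a shortest path and then simply asserts it is shortest, whereas your converse construction (factorization $\Rightarrow$ walk), the shortening of walks to paths, and your handling of the $\infty$ and $u=v$ cases supply the reverse inequality explicitly.
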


\begin{proof} Suppose $d_{\cdot, L}(a,a\circ b)=k$. It means that there are intermediate vertices $v_1,v_2,..., v_{k-1}$ and generators $t_1, t_2,..., t_k \in T$ such that $t_1 = a\cdot v_1^{-1}$, $t_k = v_{k-1}\cdot (a\circ b)^{-1}$, and $t_j = v_j \cdot v_{j+1}^{-1}$ for $2\leq j\leq k-1$.
Through backward substitutions, we have
$$a = (t_1 \cdot t_2 \cdot ... \cdot t_k) \cdot (a \circ b).$$
Using the fact that $a \circ b = a \cdot \lambda_a(b)$, we obtain
$$   a \cdot (a \circ b)^{-1} = a \cdot (a \cdot \lambda_a(b))^{-1} = a \cdot (\lambda_a(b)^{-1} \cdot a^{-1}) = a \cdot \lambda_a(b)^{-1} \cdot a^{-1}.$$
Therefore we have $a \cdot \lambda_a(b)^{-1} \cdot a^{-1} = t_1 \cdot t_2 \cdot ... \cdot t_k$, which corresponds to the shortest factorization with length $k$ under the additive operation $\cdot$. We conclude that $\ell_{\cdot, T}(a \cdot \lambda_a(b)^{-1} \cdot a^{-1})=k$ and proved Equation \ref{eq:dL,cdot}.

The proof for Equation \ref{eq:dR,cdot}, \ref{eq:dL,circ}, and \ref{eq:dR,circ} are similar, by using the appropriate left or right Cayley graph under the operation $\cdot$ or $\circ$. 
\end{proof}

It is known that if a connection set $T$ is closed under conjugation in $(A, \cdot)$, then $\mathrm{Cay}_R((A, \cdot), T)\simeq \mathrm{Cay}_L((A, \cdot), T) $ \cite{fostergreenwood2015spectra}. Consequently, the minimum factorization length of any element over $T$ is identical to that of its conjugate. Applying this to our case, the distance from $a$ to $a \circ b$ in the $\mathrm{Cay}_R((A, \cdot), T)$ is identical to its corresponding distance in the $\mathrm{Cay}_L((A, \cdot), T)$.

Furthermore, if $T$ is self-inverse with respect to additive (resp. multiplicative) operation, meaning $T=T^{-1}$ (resp. $T=T^\dagger$) we have the following corollary:
\vspace{.5cc}
\begin{corollary}\label{cor:lambda_as_path}
    If $T=T^{-1}$ then $d_{\cdot, R}(a,a\circ b)=\ell_{\cdot, T}(\lambda_a(b))$. Similarly, if $T=T^\dagger$ then $d_{\circ, R}(a,a\circ b)=\ell_{\circ, T}(b)$.
\end{corollary}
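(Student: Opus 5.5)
The plan is to derive both statements directly from Theorem~\ref{theo:distance-cayley}, specifically from Equations~\eqref{eq:dR,cdot} and~\eqref{eq:dR,circ}. The only additional ingredient is a small lemma. If $T=T^{-1}$, then $\ell_{\cdot,T}(x^{-1})=\ell_{\cdot,T}(x)$ for every $x\in A$. Likewise, if $T=T^\dagger$, then $\ell_{\circ,T}(x^\dagger)=\ell_{\circ,T}(x)$.

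To prove the lemma in the additive case, suppose $x=t_1\cdot t_2\cdots t_k$ with $t_i\in T$ and $k=\ell_{\cdot,T}(x)$. Then
\[
x^{-1}=t_k^{-1}\cdot t_{k-1}^{-1}\cdots t_1^{-1},
\]
and each $t_i^{-1}\in T^{-1}=T$. This is a factorization of $x^{-1}$ of length $k$, so $\ell_{\cdot,T}(x^{-1})\le \ell_{\cdot,T}(x)$. Applying the same argument to $x^{-1}$ in place of $x$ gives the reverse inequality.

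Two degenerate cases need a brief check:
\begin{itemize}
\item If $x=1$, then both lengths are $0$ by convention.
\item If $\ell_{\cdot,T}(x)=\infty$, then the symmetric argument shows $\ell_{\cdot,T}(x^{-1})=\infty$ as well. Otherwise a finite factorization of $x^{-1}$ would invert to a finite factorization of $x$.
\end{itemize}
The multiplicative case is the same argument with $\circ$ and $\dagger$ in place of $\cdot$ and ${}^{-1}$, using $(t_1\circ\cdots\circ t_k)^\dagger=t_k^\dagger\circ\cdots\circ t_1^\dagger$.

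With the lemma in hand, Equation~\eqref{eq:dR,cdot} gives
\[
d_{\cdot,R}(a,a\circ b)=\ell_{\cdot,T}(\lambda_a(b)^{-1})=\ell_{\cdot,T}(\lambda_a(b)).
\]
Similarly, Equation~\eqref{eq:dR,circ} gives
\[
d_{\circ,R}(a,a\circ b)=\ell_{\circ,T}(b^\dagger)=\ell_{\circ,T}(b).
\]
I do not expect a real obstacle. The only point requiring care is reading Definition~\ref{def:element_length} correctly: the multiplicative length should be understood as factorization entirely under $\circ$, which fixes the apparent typo in that definition. The infinite and identity cases must also be handled, as above, so that the equality of lengths holds in full generality.
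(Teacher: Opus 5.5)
Your proposal is correct and follows essentially the paper's route: apply the right-Cayley-graph formulas of Theorem~\ref{theo:distance-cayley}, $d_{\cdot,R}(a,a\circ b)=\ell_{\cdot,T}(\lambda_a(b)^{-1})$ and $d_{\circ,R}(a,a\circ b)=\ell_{\circ,T}(b^\dagger)$, and then invert a shortest factorization using $T=T^{-1}$ (resp.\ $T=T^\dagger$). Your two-inequality lemma, with its handling of the identity and infinite cases, is slightly more complete than the paper's proof, which only exhibits a length-$k$ factorization of $\lambda_a(b)$ and leaves minimality and the $\infty$ case implicit.
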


\begin{proof}
From Theorem~\ref{theo:distance-cayley} Equation \ref{eq:dR,cdot}, we have $\lambda_a(b)^{-1} = t_k \cdot t_{k-1} \cdot ... \cdot t_1$ for some $t_1, ... , t_k \in T$, hence
\[\lambda_a(b) = t_1^{-1} \cdot t_2^{-1} \cdot ... \cdot t_k^{-1}.\]
Since $T=T^{-1}$, there exists $s_i=t_i^{-1}\in T$ such that
$\lambda_a(b) = s_1 \cdot s_2 \cdot ... \cdot s_k$, and thus we obtain the distance from $a$ to $a\circ b$ in $\operatorname{Cay}_R((A,\cdot),T)$ as stated. The proof for the other part is similar, using $T=T^{\dagger}$.
\end{proof} 

By the first part of Corollary~\ref{cor:lambda_as_path}, one can see $\lambda_a(b)$ in skew braces as a path in the Cayley graph of skew brace $\mathrm{Cay}_R((A, \cdot, \circ), T)$.

We then show that the adjacency of $a$ and $b$ via additive (resp. multiplicative) edge in left and right Cayley graph of finite skew brace induces some multiplicative (resp. additive) paths. 
\vspace{.5 cc}
\begin{theorem} \label{theo:mixed-edge-additive/multiplicative-path-ab-left-cayley}
Let $\mathrm{Cay}_L((A, \cdot, \circ), T)$ be a left Cayley graph of a finite skew brace $(A, \cdot, \circ)$ with connection set $T \subseteq A$. For any $a, b \in A$, the following hold: 
\begin{enumerate}
    \item If $\langle T\rangle_\cdot=(A,\cdot)$ and $d_{\circ,L}(a,b)=1$, then $a=t\circ b$ for some $t\in T$, such that $d_{\cdot,L}(a,b) = \ell_{\cdot,T}(t \cdot \lambda_t(b) \cdot b^{-1})$ and $d_{\cdot,L}(b,a) = \ell_{\cdot,T}(b \cdot \lambda_t(b)^{-1} \cdot t^{-1})$. Consequently, if $\ell_{\cdot,T}(b \cdot \lambda_t(b)^{-1} \cdot t^{-1}) = k$, then the graph contains a directed $(k+1)$-cycle consisting of $k$ additive edges and one multiplicative edge.
    \item If $\langle T\rangle_\circ=(A,\circ)$ and $d_{\cdot,L}(a,b)=1$, then $a=t\cdot b$ for some $t\in T$, such that $d_{\circ,L}(a,b)=\ell_{\circ,T}(t\circ \lambda^{-1}_t(b)\circ b^\dagger)$ and $d_{\circ,L}(b,a)=\ell_{\circ,T}(b\circ \lambda_t^{-1}(b)^{\dagger}\circ t^\dagger)$. Consequently, if $\ell_{\circ,T}(b\circ \lambda_t^{-1}(b)^{\dagger}\circ t^\dagger)=k$, then the graph contains a directed $(k+1)$-cycle consisting of $k$ multiplicative edges and one additive edge.  
\end{enumerate}
\end{theorem}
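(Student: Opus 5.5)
The plan is to first isolate a general lemma implicit in the proof of Theorem~\ref{theo:distance-cayley}: for any $u,v\in A$,
\[
d_{\cdot,L}(u,v)=\ell_{\cdot,T}(u\cdot v^{-1}), \qquad d_{\circ,L}(u,v)=\ell_{\circ,T}(u\circ v^{\dagger}).
\]

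\textbf{The general lemma.} For the first identity, take a shortest additive walk $u=u_0,u_1,\dots,u_k=v$ with $t_i=u_{i-1}u_i^{-1}\in T$. The product telescopes, so $t_1\cdots t_k=u v^{-1}$, which gives $\ell_{\cdot,T}(uv^{-1})\le d_{\cdot,L}(u,v)$. Conversely, suppose $uv^{-1}=t_1\cdots t_k$ is a factorization of minimal length. Define $u_i=t_{i+1}\cdots t_k\, v$. Then $u_0=u$, $u_k=v$, and each $(u_{i-1},u_i)\in E_{\cdot,L}$, so this is a walk of length $k$ from $u$ to $v$.

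The only genuinely delicate point is that Definition~\ref{def:paths_distances} requires \emph{distinct} vertices. I handle it by minimality. If $u_i=u_j$ with $i<j$, then $t_{i+1}\cdots t_j=1$, and deleting these factors gives a shorter factorization, a contradiction. So the vertices are distinct and $d_{\cdot,L}(u,v)\le\ell_{\cdot,T}(uv^{-1})$. The multiplicative identity is proved verbatim with $\circ$ and $\dagger$ in place of $\cdot$ and ${}^{-1}$.

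\textbf{Finiteness.} The generation hypotheses $\langle T\rangle_\cdot=(A,\cdot)$ (respectively $\langle T\rangle_\circ=(A,\circ)$) ensure the relevant lengths are finite. Since $A$ is finite, every element of $A$, and in particular every inverse, is a positive power of elements of $T$. Hence every element is a product of elements of $T$ and $\ell<\infty$.

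\textbf{Part (1).} The hypothesis $d_{\circ,L}(a,b)=1$ means $(a,b)\in E_{\circ,L}$, so $t:=a\circ b^\dagger\in T$ and $a=t\circ b=t\cdot\lambda_t(b)$, using $x\circ y=x\cdot\lambda_x(y)$. By the lemma:
\begin{itemize}
\item $d_{\cdot,L}(a,b)=\ell_{\cdot,T}(ab^{-1})=\ell_{\cdot,T}(t\,\lambda_t(b)\,b^{-1})$;
\item $d_{\cdot,L}(b,a)=\ell_{\cdot,T}(ba^{-1})=\ell_{\cdot,T}(b\,\lambda_t(b)^{-1}t^{-1})$.
\end{itemize}
For the cycle, concatenate a shortest additive path $b=u_0,\dots,u_k=a$ of length $k$ with the multiplicative edge $(a,b)$. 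The $u_i$ are distinct by the lemma, so this closes into a directed $(k+1)$-cycle with $k$ additive edges and one multiplicative edge. If $a=b$, then $k=0$ and the cycle degenerates to a multiplicative loop, which I would remark on separately.

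\textbf{Part (2).} This is symmetric. From $(a,b)\in E_{\cdot,L}$ we get $t:=ab^{-1}\in T$ and $a=t\cdot b=t\circ\lambda_t^{-1}(b)$, using $x\cdot y=x\circ\lambda_x^{-1}(y)$. Then:
\begin{itemize}
\item $a\circ b^\dagger=t\circ\lambda_t^{-1}(b)\circ b^\dagger$;
\item $b\circ a^\dagger=b\circ\lambda_t^{-1}(b)^\dagger\circ t^\dagger$.
\end{itemize}
Applying the multiplicative half of the lemma gives both distance formulas. The cycle is built as before, from a shortest multiplicative path from $b$ to $a$ followed by the additive edge $(a,b)$.

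I expect the main obstacle to be none of these algebraic identities. It is the careful justification that a minimal factorization yields a path with distinct vertices rather than merely a walk, because the proof of Theorem~\ref{theo:distance-cayley} glosses over this point.
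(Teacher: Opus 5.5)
Your proposal is correct and follows essentially the paper's route: identify $d_{\cdot,L}(u,v)$ with $\ell_{\cdot,T}(u v^{-1})$ and $d_{\circ,L}(u,v)$ with $\ell_{\circ,T}(u\circ v^{\dagger})$, rewrite $a=t\circ b=t\,\lambda_t(b)$ or $a=t\cdot b=t\circ\lambda_t^{-1}(b)$, and close the cycle with the edge $(a,b)$; your two-sided lemma, with its minimality argument for distinct vertices, proves the inequality $d\le\ell$ that the paper only asserts. One small correction: the case $a=b$ you set aside cannot occur, because the length-$0$ path gives $d_{\circ,L}(a,a)=0$, so $d_{\circ,L}(a,b)=1$ already forces $a\neq b$ and hence $k\ge 1$.
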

\begin{proof}
(1) Since $d_{\circ,L}(a,b) = 1$, there exists an element $t \in T$ such that $a = t \circ b$. Applying $x \circ y = x \cdot \lambda_x(y)$, we obtain $a = t \cdot \lambda_t(b)$, which implies $a \cdot b^{-1} = t \cdot \lambda_t(b) \cdot b^{-1}$. Suppose $d_{\cdot,L}(a,b) = m$. This means that there are intermediate vertices $u_1, ..., u_{m-1}$ and generators $s_1, s_2, ..., s_m \in T$ such that $s_1 = a \cdot u_1^{-1}$, $s_m = u_{m-1} \cdot b^{-1}$, and $s_j = u_{j-1} \cdot u_j^{-1}$ for $2 \le j \le m-1$. Through backward substitutions, we have
\[
a = (s_1 \cdot s_2 \cdot ... \cdot s_m) \cdot b \iff t \cdot \lambda_t(b) \cdot b^{-1} = a \cdot b^{-1} = s_1 \cdot s_2 \cdot ... \cdot s_m,
\]
which corresponds to the shortest factorization of length $m$ under the additive operation $\cdot$. Thus, we conclude that $d_{\cdot,L}(a,b) = \ell_{\cdot,T}(t \cdot \lambda_t(b) \cdot b^{-1})$.

It is obvious that if $\langle T \rangle_\cdot = (A,\cdot)$ and $A$ is finite then, there exists $r_1,r_2,...,r_k\in T$ such that $b\cdot a^{-1}=r_1\cdot r_2 \cdot ... \cdot r_k$. This implies $d_{\cdot,L}(b,a) = \ell_{\cdot,T}(b \cdot \lambda_t(b)^{-1} \cdot t^{-1})$. Finally, concatenating the the shortest additive path from $b$ to $a$ with multiplicative edge $(a,b) \in E_{\circ,L}$ forms a directed $(k+1)$-cycle in $\mathrm{Cay}_L((A, \cdot, \circ), T)$, consisting of $k$ additive edges and one multiplicative edge.

(2) By using the relation $x \cdot y = x \circ \lambda_x^{-1}(y)$, the proof follows immediately.
\end{proof}
\vspace{0.5 cc}
\begin{theorem} \label{theo:mixed-edge-additive/multiplicative-path-ab-right-cayley}
Let $\mathrm{Cay}_R((A, \cdot, \circ), T)$ be a right Cayley graph of a finite skew brace $(A, \cdot, \circ)$ with connection set $T \subseteq A$. For any $a, b \in A$, the following hold: 
\begin{enumerate}
    \item If $\langle T\rangle_\cdot=(A,\cdot)$ and $d_{\circ,R}(a,b)=1$, then $a = b \circ t$ for some $t \in T$, such that $d_{\cdot,R}(a,b) = \ell_{\cdot,T}(\lambda_b(t))$ and $d_{\cdot,R}(b,a)= \ell_{\cdot,T}(\lambda_b(t)^{-1})$. Consequently, if $\ell_{\cdot,T}(\lambda_b(t)^{-1}) = k$, then the graph contains a directed $(k+1)$-cycle consisting of $k$ additive edges and one multiplicative edge.
    \item If $\langle T\rangle_\circ=(A,\circ)$ and $d_{\cdot,R}(a,b)=1$, then $a = b\cdot t$ for some $t \in T$, such that $d_{\circ,R}(a,b)=\ell_{\circ,T}(\lambda^{-1}_b(t))$ and $d_{\circ,R}(b,a)=\ell_{\circ,T}(\lambda_b^{-1}(t)^{\dagger})$. Consequently, if $\ell_{\circ,T}(\lambda_b^{-1}(t)^{\dagger})=k$, then the graph contains a directed $(k+1)$-cycle consisting of $k$ multiplicative edges and one additive edge.  
\end{enumerate}
\end{theorem}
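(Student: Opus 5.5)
We mirror the proof of Theorem~\ref{theo:mixed-edge-additive/multiplicative-path-ab-left-cayley}, replacing left edges by right edges. Everything comes from two observations. First, the definition of $E_{\cdot,R}$ and $E_{\circ,R}$ converts paths into factorizations. Second, the identities $x\circ y = x\cdot\lambda_x(y)$ and $x\cdot y = x\circ\lambda_x^{-1}(y)$ convert an edge of one type into an element to be factorized in the other operation.

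\emph{Path--factorization correspondence.} An additive path $a=u_0,u_1,\dots,u_m=b$ in $\mathrm{Cay}_R$ means $s_i=u_i^{-1}\cdot u_{i-1}\in T$ for each $i$. Telescoping gives
\[b^{-1}\cdot a = s_m\cdot s_{m-1}\cdots s_1.\]
Conversely, a factorization $b^{-1}a=s_m\cdots s_1$ over $T$ yields a walk with vertices $u_i=b\cdot s_m\cdots s_{i+1}$. A shortest walk is automatically a path: if it repeated a vertex, deleting the closed segment would give a shorter factorization. Hence
\[d_{\cdot,R}(a,b)=\ell_{\cdot,T}(b^{-1}a).\]
The reversed product order $s_m\cdots s_1$ still ranges over all products of $m$ elements of $T$, so it does not affect the length. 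The same argument gives $d_{\circ,R}(a,b)=\ell_{\circ,T}(b^\dagger\circ a)$. This is essentially the content already used in Theorem~\ref{theo:distance-cayley}.

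\emph{Part (1).} The hypothesis $d_{\circ,R}(a,b)=1$ means $t:=b^\dagger\circ a\in T$, i.e.\ $a=b\circ t$. Then $a=b\cdot\lambda_b(t)$, so $b^{-1}a=\lambda_b(t)$, and the correspondence gives $d_{\cdot,R}(a,b)=\ell_{\cdot,T}(\lambda_b(t))$. Symmetrically, $a^{-1}b=\lambda_b(t)^{-1}$, which gives $d_{\cdot,R}(b,a)=\ell_{\cdot,T}(\lambda_b(t)^{-1})$.

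The hypothesis $\langle T\rangle_\cdot=(A,\cdot)$ with $A$ finite ensures these lengths are finite. This is the one point needing care: $\langle T\rangle$ a priori involves inverses of elements of $T$. In a finite group, however, $t^{-1}=t^{\,o(t)-1}$, so every element is a positive product of elements of $T$.

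For the cycle, take a shortest additive path from $b$ to $a$ of length $k$ and append the multiplicative edge $(a,b)\in E_{\circ,R}$. The vertices along the additive path are distinct, so this closes into a directed $(k+1)$-cycle with $k$ additive edges and one multiplicative edge. When $k=0$ we have $a=b$, and the cycle degenerates to a loop; this is consistent with the count.

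\emph{Part (2).} This is the same argument with the roles of the operations swapped. The hypothesis $d_{\cdot,R}(a,b)=1$ gives $t:=b^{-1}a\in T$, so $a=b\cdot t=b\circ\lambda_b^{-1}(t)$. Hence $b^\dagger\circ a=\lambda_b^{-1}(t)$ and $a^\dagger\circ b=\lambda_b^{-1}(t)^\dagger$. The multiplicative correspondence then gives both stated distances, with finiteness coming from $\langle T\rangle_\circ=(A,\circ)$ and finiteness of $A$ as before. The cycle is formed by a shortest multiplicative path from $b$ to $a$ followed by the additive edge $(a,b)$.

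The only real obstacle is bookkeeping: keeping the order of factors and the direction of edges straight in the right Cayley graph. The algebra itself reduces to the two $\lambda$ identities.
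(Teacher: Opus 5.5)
Your proposal is correct and follows the paper's route. The paper simply says the proof mirrors the left-graph case: telescope a path into a factorization over $T$, then rewrite $b^{-1}a$ (resp.\ $b^{\dagger}\circ a$) using $x\circ y = x\cdot\lambda_x(y)$ and $x\cdot y = x\circ\lambda_x^{-1}(y)$, which is exactly what you do. Your version is more careful than the paper's, since you prove both directions of the path--factorization correspondence, note that shortest walks are paths, and justify positive products in a finite group; your $k=0$ remark is vacuous, because $d_{\circ,R}(a,b)=1$ already forces $a\neq b$ and hence $k\geq 1$.
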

\begin{proof}
The proof is similar to the proof of Theorem~\ref{theo:mixed-edge-additive/multiplicative-path-ab-left-cayley}.
\end{proof}

\vspace{.5 cc}

\subsection{Ideals and Cayley Graphs of Skew Braces }\mbox{}\par
We characterize ideals of a finite skew brace using distances in the corresponding Cayley graphs. We have the following result.
\vspace{0.5 cc}
\begin{theorem}\label{theorem_ideal}
Let $(A, \cdot, \circ)$ be a finite skew brace and the connection $T$ be a subgroup of $(A, \cdot)$. The following are equivalent:
\begin{enumerate}
    \item $T$ is an ideal of $A$.
    \item $d_{\cdot, L}(a,a\circ t) = d_{\cdot, R}(a,a\circ t) = d_{\circ, L}(a,a\circ t) = d_{\circ,R}(a,a\circ t) = 1$, for all $a\in A$ and $t\in T$.
    \item $E_{\cdot, L} = E_{\cdot, R} = E_{\circ, R} = E_{\circ, L}$.
\end{enumerate}
\end{theorem}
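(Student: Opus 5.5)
The proof rests on translating each edge set into a coset condition. By Definition~\ref{def:cayley-skew-brace}, $(u,v)\in E_{\cdot,L}$ iff $u\in T\cdot v$; $(u,v)\in E_{\cdot,R}$ iff $u\in v\cdot T$; $(u,v)\in E_{\circ,L}$ iff $u\in T\circ v$; and $(u,v)\in E_{\circ,R}$ iff $u\in v\circ T$. So condition (3) says precisely that
\[
T\cdot v = v\cdot T = T\circ v = v\circ T \qquad \text{for every } v\in A.
\]
I will prove $(1)\Rightarrow(3)\Rightarrow(1)$ and $(1)\Rightarrow(2)\Rightarrow(3)$, working throughout with this form of (3).

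\emph{Proof of $(1)\Rightarrow(3)$.} If $T$ is an ideal, Lemma~\ref{lem:propideal_skewbrace} gives that $T$ is normal in $(A,\cdot)$, so $T\cdot v=v\cdot T$. The same lemma gives $v\circ T=v\cdot T$. Normality of $T$ in $(A,\circ)$ gives $T\circ v=v\circ T$. Hence all four cosets coincide.

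\emph{Proof of $(3)\Rightarrow(1)$.} Assume the four cosets agree for every $v$.
\begin{itemize}
\item Taking $v\in T$ gives $v\circ T=v\cdot T=T$, because $T$ is a subgroup of $(A,\cdot)$. Hence $T$ is closed under $\circ$, and since $A$ is finite, $T$ is a subgroup of $(A,\circ)$.
\item The equality $T\circ v=v\circ T$ then shows that $T$ is normal in $(A,\circ)$.
\item The equality $T\cdot v=v\cdot T$ is exactly the condition $I\cdot a=a\cdot I$ in Definition~\ref{def:ideal_skewbrace}.
\item Finally, $\lambda_a(T)=a^{-1}\cdot(a\circ T)=a^{-1}\cdot a\cdot T=T$.
\end{itemize}
So $T$ is an ideal.

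\emph{Proof of $(1)\Rightarrow(2)$.} Fix $a\in A$ and $t\in T$. Since $a\circ t\in a\circ T$ and all four cosets of $a$ coincide, the pair $(a,a\circ t)$ lies in each of the four edge sets. Equivalently, by Theorem~\ref{theo:distance-cayley}, each of the elements
\[
a\cdot\lambda_a(t)^{-1}\cdot a^{-1},\qquad \lambda_a(t)^{-1},\qquad a\circ t^\dagger\circ a^\dagger,\qquad t^\dagger
\]
lies in $T$. This uses normality in both groups together with $\lambda_a(T)=T$. Therefore each of these elements has length $1$.

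\emph{Proof of $(2)\Rightarrow(3)$.} From $d=1$ we read off the four edge memberships for every $a\in A$ and $t\in T$:
\begin{itemize}
\item From $E_{\cdot,L}$: $a\circ T\subseteq T\cdot a$.
\item From $E_{\cdot,R}$: $(a\circ t)^{-1}\cdot a\in T$, so $a\circ T\subseteq a\cdot T$.
\item From $E_{\circ,L}$: $a\circ t^\dagger\circ a^\dagger\in T$.
\item From $E_{\circ,R}$: $t^\dagger\in T$.
\end{itemize}
All these cosets have size $|T|$, so the first two inclusions are equalities: $a\circ T=T\cdot a=a\cdot T$. Taking $a\in T$ as before shows that $T$ is a subgroup of $(A,\circ)$. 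Since $T$ is closed under $\dagger$, the third item gives $a\circ T\circ a^\dagger\subseteq T$, i.e. $T$ is normal in $(A,\circ)$, so $T\circ a=a\circ T$. This yields (3).

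\emph{Expected obstacle: the element $t=1$.} For $t=1$ we have $a\circ t=a$, so the distance from $a$ to $a\circ 1$ is $0$, not $1$. Statement (2) must therefore be read either for $t\in T\setminus\{1\}$, or with loops counted as paths of length $1$. I would adopt the second reading, under which (2) says exactly that $(a,a\circ t)$ is an edge of each type. Under the first reading the counting argument still works, since $a\circ 1=a$ lies in each coset trivially. The remaining steps are routine coset manipulations.
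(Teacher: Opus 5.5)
Your proof is correct, but it is organized differently from the paper's.

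\emph{The paper's route.} It proves $(1)\Leftrightarrow(2)$ and $(1)\Leftrightarrow(3)$ separately, and routes $(1)\Leftrightarrow(2)$ through Theorem~\ref{theo:distance-cayley}. Each distance condition becomes membership in $T$ of one specific element: $\lambda_a(t)^{-1}$, $a\lambda_a(t)^{-1}a^{-1}$, $a\circ t^{\dagger}\circ a^{\dagger}$ or $t^{\dagger}$. It then obtains $\lambda_a(T)=T$ from injectivity of $\lambda_a$.

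\emph{Your route.} You use the coset form $Tv=vT=T\circ v=v\circ T$ of (3) as a hub. Then $(1)\Rightarrow(2)$ is just the observation that $(a,a\circ t)$ is an edge of every type. For $(2)\Rightarrow(3)$ you read the edge memberships off directly, and the count $|a\circ T|=|Ta|=|aT|=|T|$ does the job of the paper's injectivity argument.

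\emph{Trade-off.} Your route is shorter and makes Theorem~\ref{theo:distance-cayley} unnecessary. The paper's route shows which ideal axiom each single distance condition encodes: $d_{\cdot,R}$ gives $\lambda$-invariance, $d_{\cdot,L}$ gives normality in $(A,\cdot)$, and $d_{\circ,L}$ gives normality in $(A,\circ)$. That is what makes the theorem an application of the distance formulas.

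\emph{Closure under $\circ$.} Your $(3)\Rightarrow(1)$ is a bit more careful than the paper's. Taking $v\in T$ to get $v\circ T=vT=T$ makes closure of $T$ under $\circ$ explicit. The paper at that point only checks $v\circ T\circ v^{\dagger}\subseteq T$, and closure has to be recovered from its later step $\lambda_v(T)\subseteq T$.

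\emph{The case $t=1$.} The paper silently adopts your first reading: it assumes $t\neq 1$ in both directions. Your alternative reading clashes with Definition~\ref{def:paths_distances}, where paths consist of distinct vertices, so every distance from a vertex to itself is $0$ and a loop never gives distance $1$. Commit to the first reading. In $(1)\Rightarrow(2)$, note that it is $t\neq 1$ that gives $a\circ t\neq a$, and hence distance exactly $1$ rather than $0$.
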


\begin{proof}
We will show that (1) $\Longleftrightarrow$ (2) and (2) $\Longleftrightarrow$ (3).\vspace*{0.1cm}

\noindent $\underline{\mathrm{(1)}\Rightarrow\mathrm{(2)}}$. Suppose $T$ is an ideal of $A$, and let $a\in A$ and $t\in T$ with $t\neq 1$. We show that $a\circ t$ is at distance $1$ from $a$ in all four Cayley graphs.

Since $T$ is an ideal, $\lambda_a(t)\in T$, and $T$ is a normal subgroup of both $(A,\cdot)$ and $(A,\circ)$. Therefore $  \lambda_a(t)^{-1}, a\,\lambda_a(t)^{-1}a^{-1}, t^{\dagger}, a\circ t^{\dagger}\circ a^{\dagger}$ are non-identity element in $T$. By Theorem \ref{theo:distance-cayley}, we conclude that
\begin{align*}
  d_{\cdot,R}(a,a\circ t) &= \ell_{\cdot,T}(\lambda_a(t)^{-1}) = 1, &
  d_{\cdot,L}(a,a\circ t) &= \ell_{\cdot,T}(a\,\lambda_a(t)^{-1}a^{-1}) = 1,\\
  d_{\circ,R}(a,a\circ t) &= \ell_{\circ,T}(t^{\dagger}) = 1, &
  d_{\circ,L}(a,a\circ t) &= \ell_{\circ,T}(a\circ t^{\dagger}\circ a^{\dagger}) = 1.
\end{align*}

\noindent $\underline{\mathrm{(2)}\Rightarrow\mathrm{(1)}}$. Assume (2), and let $a\in A$ and $t\in T$. We may assume $t\neq 1$, since every condition below is trivial for $t=1$. Recall that $\ell_{\cdot,T}(x)=1$ implies $x\in T$, and similarly for $\ell_{\circ,T}$.

From $d_{\cdot,R}(a,a\circ t)=\ell_{\cdot,T}\big(\lambda_a(t)^{-1}\big)=1$ we get $\lambda_a(t)^{-1}\in T$, hence $\lambda_a(t)\in T$, as $(T,\cdot)$ is a subgroup of $(A,\cdot)$. Thus $\lambda_a(T)\subseteq T$, and since $\lambda_a$ is injective and $T$ is finite, $\lambda_a(T)=T$. Consequently, $(T,\circ)$ is a subgroup of $(A,\circ)$ since for any $s,t\in T$, we have $s\circ t = s\,\lambda_s(t)\in T$, and $t^{\dagger}$ is in $T$ since $d_{\circ, R}(a,a\circ t) = \ell_{\circ, T}(t^{\dag})=1$.

From $d_{\cdot,L}(a,a\circ t)=\ell_{\cdot,T}\big(a\,\lambda_a(t)^{-1}a^{-1}\big)=1$ we get $a\,\lambda_a(t)^{-1}a^{-1}\in T$. As $t$ runs over $T$, the element $\lambda_a(t)^{-1}$ runs over all of $T$, because $\lambda_a(T)=T$. Hence $aTa^{-1}\subseteq T$, so $(T,\cdot)$ is normal in $(A,\cdot)$.

Finally, $d_{\circ,L}(a,a\circ t)=\ell_{\circ,T}\big(a\circ t^{\dagger}\circ a^{\dagger}\big)=1$ gives $a\circ t^{\dagger}\circ a^{\dagger}\in T$ for all $t\in T$. Hence $(T,\circ)$ is normal in $(A,\circ)$.

Therefore $T$ is an ideal of $A$.\vspace*{0.2cm}

\noindent $\underline{\mathrm{(1)}\Rightarrow\mathrm{(3)}}$. Suppose $T$ is an ideal of $A$. By definition of the Cayley graphs, for $u,v\in A$ we have
\[
  (u,v)\in E_{\cdot,R} \iff u\in vT, \qquad
  (u,v)\in E_{\cdot,L} \iff u\in Tv,
\]
\[
  (u,v)\in E_{\circ,R} \iff u\in v\circ T, \qquad
  (u,v)\in E_{\circ,L} \iff u\in T\circ v.
\]
It therefore suffices to show that $vT = Tv = v\circ T = T\circ v$ for every $v\in A$. The equalities $vT=Tv$ and $v\circ T = T\circ v$ hold because $T$ is normal in $(A,\cdot)$ and in $(A,\circ)$, respectively. Moreover, $\lambda_v(T)\subseteq T$ and $\lambda_{v^{\dagger}}(T)\subseteq T$ give $\lambda_v(T)=T$, so
\[
  v\circ T = v\,\lambda_v(T) = vT.
\]
Hence $E_{\cdot,R}=E_{\cdot,L}=E_{\circ,R}=E_{\circ,L}$. 
\vspace*{0.2cm}

\noindent $\underline{\mathrm{(3)}\Rightarrow\mathrm{(1)}}$. Suppose $E_{\cdot, L} = E_{\cdot, R} = E_{\circ, R} = E_{\circ, L}$. We show that $T$ is an ideal by showing $(T,\circ)$ is a normal subgroup of $(A,\circ)$, $Tv=vT$, and $\lambda_v(T)\subseteq T$ for any $v\in A$.

First, we show that $(T,\circ)$ is a normal subgroup of $(A,\circ)$. Let $t\in T$, and $v\in A$, and write $u =  v\circ t$ in $A$. Therefore $v^{\dag}\circ u = t\in A$, and thus $(u,v) \in E_{\circ, R}$. By our assumption, $E_{\circ, R} = E_{\circ, L}$, therefore there is $t'\in T$ such that $u=t'\circ v  = v\circ t$. Hence, $ v\circ t \circ v^{\dag}=t'\in T$, and thus $(T,\circ)$ is a normal subgroup of $A$.

Next, we show that $Tv = vT$ for any $v\in A$. For any $t\in T$, write $u = tv\in Tv\subseteq A$, therefore $uv^{-1} =t\in T$ and thus $(u,v)\in E_{\cdot, L}$. By our assumption, $E_{\cdot, L} = E_{\cdot, R}$, therefore there is $t'\in T$ such that $u = tv = vt'$, and hence $Tv \subseteq vT$. By similar argument, one can also show that $vT \subseteq Tv$. We conclude that $Tv = vT$

Finally, we show that $\lambda_v(T)\subseteq T$ for any $v\in A$. For any $t\in T\subseteq A$, we have $v \lambda_v(t) = v\circ t$. Write $u=v\circ t$, then it means $(u,v)\in E_{\circ, R}$. By our assumption, $E_{\circ, R} = E_{\cdot, R}$, therefore, there is $t' \in T$ such that $v\circ t = v\lambda_v(t) = vt'$. Thus, for any $t\in T$, $\lambda_v(t) = t'$ for some $t'\in T$ and we conclude that $\lambda_v(T)\subseteq T$.\vspace*{0.2cm}

This completes our proof.
\end{proof}
\vspace{0.5 cc}
\begin{example} \label{ex:skew_brace_s3}
Let $(S_3, \cdot,\circ)$ be a skew brace on $S_3$ as explained in Example~\ref{ex:skew_brace_s3} and $T = H = \{\mathrm{1}, (123), (132)\}$ be a connection set. One can verify that $T$ is an ideal of skew brace $S_3$ (by Definition~\ref{def:ideal_skewbrace}) and the equivalences established in Theorem~\ref{theorem_ideal}. 
\begin{figure}[H]
    \centering
    \resizebox{0.9\textwidth}{!}{
    \newcommand{\drawbichromaticclique}[3]{
        \draw[->, semithick, red, bend right=15] (#1) to (#2); \draw[->, semithick, red, bend right=15] (#2) to (#1);
        \draw[->, semithick, red, bend right=15] (#2) to (#3); \draw[->, semithick, red, bend right=15] (#3) to (#2);
        \draw[->, semithick, red, bend right=15] (#3) to (#1); \draw[->, semithick, red, bend right=15] (#1) to (#3);
        
        \draw[->, semithick, blue, bend right=35] (#1) to (#2); \draw[->, semithick, blue, bend right=35] (#2) to (#1);
        \draw[->, semithick, blue, bend right=35] (#2) to (#3); \draw[->, semithick, blue, bend right=35] (#3) to (#2);
        \draw[->, semithick, blue, bend right=35] (#3) to (#1); \draw[->, semithick, blue, bend right=35] (#1) to (#3);
        
        \draw[->, semithick, red, out=105, in=135, loop, distance=4mm] (#1) to (#1);
        \draw[->, semithick, blue, out=75, in=45, loop, distance=4mm] (#1) to (#1);
        
        \draw[->, semithick, red, out=225, in=255, loop, distance=4mm] (#2) to (#2);
        \draw[->, semithick, blue, out=195, in=165, loop, distance=4mm] (#2) to (#2);
        
        \draw[->, semithick, red, out=285, in=315, loop, distance=4mm] (#3) to (#3);
        \draw[->, semithick, blue, out=345, in=15, loop, distance=4mm] (#3) to (#3);
    }
    
    \newcommand{\drawmonochromaticclique}[4]{
        \draw[->, semithick, #4, bend right=15] (#1) to (#2); \draw[->, semithick, #4, bend right=15] (#2) to (#1);
        \draw[->, semithick, #4, bend right=15] (#2) to (#3); \draw[->, semithick, #4, bend right=15] (#3) to (#2);
        \draw[->, semithick, #4, bend right=15] (#3) to (#1); \draw[->, semithick, #4, bend right=15] (#1) to (#3);
        
        \draw[->, semithick, #4, out=120, in=60, loop, distance=4mm] (#1) to (#1);
        \draw[->, semithick, #4, out=240, in=180, loop, distance=4mm] (#2) to (#2);
        \draw[->, semithick, #4, out=360, in=300, loop, distance=4mm] (#3) to (#3);
    }

    \begin{tikzpicture}[
        every node/.style={circle, draw=black, fill=gray!10, font=\scriptsize, minimum size=8mm, inner sep=0.5pt},
        >=stealth
    ]
    
    \begin{scope}[xshift=-4cm]
        \node[draw=none, fill=none, rectangle, font=\normalsize] at (0, 2) {$\mathrm{Cay}_L((S_3, \cdot, \circ), T)$};
        
        \node (1a) at (-2.0, 0.8) {$1$};
        \node (2a) at (-2.8, -0.5) {$(123)$};
        \node (3a) at (-1.2, -0.5) {$(132)$};
        
        \node (4a) at (2.0, 0.8) {$(12)$};
        \node (5a) at (1.2, -0.5) {$(13)$};
        \node (6a) at (2.8, -0.5) {$(23)$};
        
        \drawbichromaticclique{1a}{2a}{3a}
        \drawbichromaticclique{4a}{5a}{6a}
    \end{scope}

    \begin{scope}[xshift=4cm]
        \node[draw=none, fill=none, rectangle, font=\normalsize] at (0, 2) {$\mathrm{Cay}_R((S_3, \cdot, \circ), T)$};
        
        \node (1b) at (-2.0, 0.8) {$1$};
        \node (2b) at (-2.8, -0.5) {$(123)$};
        \node (3b) at (-1.2, -0.5) {$(132)$};
        
        \node (4b) at (2.0, 0.8) {$(12)$};
        \node (5b) at (1.2, -0.5) {$(13)$};
        \node (6b) at (2.8, -0.5) {$(23)$};
        
        \drawbichromaticclique{1b}{2b}{3b}
        \drawbichromaticclique{4b}{5b}{6b}
    \end{scope}

    \begin{scope}[xshift=-4cm, yshift=-3.5cm]
        \node[draw=none, fill=none, rectangle, font=\normalsize] at (0, 2) {$\mathrm{Cay}_L((S_3, \cdot), T)$};
        
        \node (1c) at (-2.0, 0.8) {$1$};
        \node (2c) at (-2.8, -0.5) {$(123)$};
        \node (3c) at (-1.2, -0.5) {$(132)$};
        
        \node (4c) at (2.0, 0.8) {$(12)$};
        \node (5c) at (1.2, -0.5) {$(13)$};
        \node (6c) at (2.8, -0.5) {$(23)$};
        
        \drawmonochromaticclique{1c}{2c}{3c}{red}
        \drawmonochromaticclique{4c}{5c}{6c}{red}
    \end{scope}

    \begin{scope}[xshift=4cm, yshift=-3.5cm]
        \node[draw=none, fill=none, rectangle, font=\normalsize] at (0, 2) {$\mathrm{Cay}_R((S_3, \cdot), T)$};
        
        \node (1d) at (-2.0, 0.8) {$1$};
        \node (2d) at (-2.8, -0.5) {$(123)$};
        \node (3d) at (-1.2, -0.5) {$(132)$};
        
        \node (4d) at (2.0, 0.8) {$(12)$};
        \node (5d) at (1.2, -0.5) {$(13)$};
        \node (6d) at (2.8, -0.5) {$(23)$};
        
        \drawmonochromaticclique{1d}{2d}{3d}{red}
        \drawmonochromaticclique{4d}{5d}{6d}{red}
    \end{scope}

    \begin{scope}[xshift=-4cm, yshift=-7cm]
        \node[draw=none, fill=none, rectangle, font=\normalsize] at (0, 2) {$\mathrm{Cay}_L((S_3, \circ), T)$};
        
        \node (1e) at (-2.0, 0.8) {$1$};
        \node (2e) at (-2.8, -0.5) {$(123)$};
        \node (3e) at (-1.2, -0.5) {$(132)$};
        
        \node (4e) at (2.0, 0.8) {$(12)$};
        \node (5e) at (1.2, -0.5) {$(13)$};
        \node (6e) at (2.8, -0.5) {$(23)$};
        
        \drawmonochromaticclique{1e}{2e}{3e}{blue}
        \drawmonochromaticclique{4e}{5e}{6e}{blue}
    \end{scope}

    \begin{scope}[xshift=4cm, yshift=-7cm]
        \node[draw=none, fill=none, rectangle, font=\normalsize] at (0, 2) {$\mathrm{Cay}_R((S_3, \circ), T)$};
        
        \node (1f) at (-2.0, 0.8) {$1$};
        \node (2f) at (-2.8, -0.5) {$(123)$};
        \node (3f) at (-1.2, -0.5) {$(132)$};
        
        \node (4f) at (2.0, 0.8) {$(12)$};
        \node (5f) at (1.2, -0.5) {$(13)$};
        \node (6f) at (2.8, -0.5) {$(23)$};
        
        \drawmonochromaticclique{1f}{2f}{3f}{blue}
        \drawmonochromaticclique{4f}{5f}{6f}{blue}
    \end{scope}
    
    \end{tikzpicture}
    } 

    \vspace{0.2cm}
    \begin{tikzpicture}
        \draw[->, semithick, red] (0,0) -- (0.5,0) node[right, text=black, font=\scriptsize, draw=none, fill=none] {Additive edge};
        \draw[->, semithick, blue] (3.4,0) -- (3.9,0) node[right, text=black, font=\scriptsize, draw=none, fill=none] {Multiplicative edge};
    \end{tikzpicture}

    \caption{The Cayley graphs of the skew brace $S_3$ where the connection set $T=\{1, (123), (132)\}$ is an ideal.}
    \label{fig:cayley-s3-ideal}
\end{figure}
\end{example}

\vspace{1.5 cc}
\section{CONCLUDING REMARKS}

In this paper, we have introduced Cayley graphs of finite skew braces as bichromatic directed multigraphs. We established that distances in these graphs correspond to minimal factorization lengths under the $\lambda$-map. This approach yields a characterization of the ideals of a skew brace in terms of their corresponding Cayley graphs. From an algebraic graph-theoretic perspective, this offers an alternative method to identify these ideals, substantially reducing the computational complexity required for large skew braces.

The Cayley graphs of finite skew braces established here suggest several directions for future research as follows: 
\begin{enumerate}
    \item Generalizing our results related to paths and distances by considering both operations simultaneously, allowing the characterization of the graph's diameter.
    \item Establishing upper bounds for this diameter based on the properties of the $\lambda$-map.
    \item Investigating whether the set-theoretic solutions to the YBE can be characterized via the structures of Cayley graphs of skew braces.
    \item Considering $d_{mix}$ as a metric on skew braces, thereby equipping skew braces with a metric structure, and relating this metric framework to word metrics on free skew braces. Additionally, analyzing word metrics and minimal factorization lengths within these graphs holds potential for practical applications, including cryptographic and combinatorial optimization problems involving path routing in algebraic networks.
    \item Comparing the regularity of the underlying simple graph of Cayley graphs of finite skew braces with the regularity of Cayley graphs of groups.
    \item Following \cite{guarnieri2017skew}, for any skew brace $(A,\cdot, \circ)$, there is a corresponding semidirect product group $\Lambda = (A, \cdot) \rtimes_\lambda (A, \circ)$. Therefore, it is natural to ask how the Cayley graph of group $\Lambda$ relates to the Cayley graph of skew brace $A$.  
\end{enumerate} 
\vspace{1.5cc}

\section*{\centering DECLARATIONS}
\addcontentsline{toc}{section}{Declarations} 

\noindent The authors declare that they have no conflict of interest.

\vspace{1.5cc}

\section*{\centering ACKNOWLEDGMENTS}
\addcontentsline{toc}{section}{Acknowledgments} 

\noindent The authors acknowledge the use of AI to assist with English translation, language refinement, and figure coding. All generated content has been thoroughly reviewed and verified by the authors, who assume full responsibility for the final content.

\vspace{1cm}

\renewcommand{\refname}{REFERENCES} 
\bibliographystyle{ieeetr}
\bibliography{references}

\end{document}